\documentclass[a4paper,12pt]{article}

\usepackage{amsmath}
\usepackage{amsthm}
\usepackage{amsfonts}
\setlength{\footnotesep}{0.5cm}

\topmargin -1 cm
\headheight 0cm
\headsep 0cm
\textheight 25 cm
\textwidth 15 cm
\oddsidemargin 0 cm
\evensidemargin 0 cm

\newtheorem{theorem}{Theorem}

\newtheorem{proposition}[theorem]{Proposition}

\newcommand{\pr}{\mathbb{P}}
\newcommand{\B}{\textbf{B}}

\newcommand{\R}{\mathbb{R}}

\newcommand{\Z}{\mathbb{Z}}

\interfootnotelinepenalty=10000
\title{The entirely coupled region of \\ supercritical contact processes} 

\date{\vspace{-6mm}}
\author{Achillefs Tzioufas\footnote{University}}
\begin{document}
\maketitle 

\begin{abstract} 
We consider translation-invariant, finite range, supercritical contact processes. We show the existence of unbounded space-time cones within which the descendancy of the process from full occupancy may with positive probability be identical to that of the process from the single site at its apex. The proof comprises an argument that leans upon refinements of a successful coupling among these two processes, and is valid in $d$-dimensions. 
\end{abstract} 

{\bf 1. Introduction.} The \textit{contact process} is an extensively studied class of spatial Markov process introduced by Harris \cite{H74} in 1974; contact distributions were considered first in Mollison \cite{M72}, for later developments in this regard, see also Mollison \cite{M77}. The process can be viewed as a simple model for spatial growth, or the spread of an infection in a spatially structured population. In this note we will adopt the perspective and associated terminology stemming from the former interpretation. We have opted to work our proofs in detail in one (spatial) dimension, since the $d$-dimensional extension is directly analogous and is omitted as such. We consider the following class of \textit{translation-invariant} and \textit{finite range} contact processes $(\xi_{t}: t \geq0)$. Regarding sites in $\xi_{t}$ as \textit{occupied} and others as \textit{vacant}, the process with parameters $\underline{\mu} = (\mu_{i}; \mbox{ } i = -M, \dots, M, i \not= 0)$ evolves according to the following local prescription: 

\noindent (i) Particles die at rate 1.

\noindent (ii) A particle at $x$ gives birth at rate $\mu_{y-x}$ at $y$, $|x-y|\leq M$, $y \not=x$.

\noindent (iii) There is no more than one particle per site\footnote{This may be thought of either as that 
births at occupied sites are disallowed, or that those births take place, but result in merging and coalescence with the already existent, at the site that birth is given, particle; the latter conceptualizing will be in force in the sequel.}.
\vspace{1mm}

\noindent Note that equivalently, the rate at which births occur can be any linear combination of the parameter rates, and that deaths may occur at any constant rate.  For background regarding this process we refer the reader to Liggett \cite{L85, L99} and Durrett \cite{D95}. 

Whenever $\{\xi_{t} \not= \emptyset \mbox{ } \forall t\}$ occurs, we say that the process \textit{survives}, or that \textit{survival}  occurs; whenever $\{\xi_{t} \not= \emptyset \mbox{ } \forall t\}^{c}$ occurs we say that the process  \textit{dies out}. 
We say that the process is \textit{supercritical} whenever  $\underline{\mu}$ is such that the probability of survival from a finite sets is strictly positive. Note that, although this is not the classical definition of supercriticality, which is, for instance in the uniform, symmetric interaction case, i.e.\ $\mu:= \mu_{i}$ for all $i$, that $\mu > \mu_{c} := \inf\{\mu: \pr(\xi_{t} \not= \emptyset \mbox{ } \forall t)>0\}$,  the two definitions are equivalent for it is known that the process for the parameters on the critical surface dies out with probability one, cf.\ Bezuidenhout and Grimmett \cite{BG90} and Bezuidenhout and Gray \cite{BG94}.  

Let $\xi_{t}^{O}$ be the process started from $\{0\}$ and further let $r_{t} = \sup\xi_{t}^{O}$ and $l_{t} = \inf\xi_{t}^{O}$ be respectively the rightmost and leftmost sites of $\xi_{t}^{O}$. In addition, let  $\alpha$  (resp.\ $\beta$) be the \textit{asymptotic velocity}\footnote{a consequence of Liggett's \cite{L85} version of Kingman's subadditive theorem \cite{K68}, originally derived in Durrett \cite{D80}.} of $r_{t}$ (resp.\ $l_{t}$), i.e.\ $\displaystyle{r_{t}/t \rightarrow \alpha}$ and $\displaystyle{l_{t}/t \rightarrow \beta}$, as $t \rightarrow \infty$, a.s. on survival.
Furthermore, let $I_{t} = [(\beta+\epsilon)t, (\alpha-\epsilon)t]$, $t \in \R_{+}$, where $\epsilon>0$ is such that $\beta+\epsilon < \alpha-\epsilon$. We can now state the result.

\begin{theorem}
Let $\xi_{t}^{O}$ and $\xi_{t}^{\Z}$ be the supercritical contact process started from $\{0\}$ and $\Z$ respectively. We have that: 
$\displaystyle{\xi^{\Z}_{t} \cap I_{t} =\xi^{O}_{t} \cap I_{t}, \mbox{ for all } t, }$
with strictly positive probability. 
\end{theorem}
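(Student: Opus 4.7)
The plan is to work entirely within the Harris graphical representation, on which $\xi^{O}$ and $\xi^{\Z}$ are coupled with $\xi^{O}_{t} \subseteq \xi^{\Z}_{t}$ for every $t$ and the additivity $\xi^{\Z}_{t} = \xi^{O}_{t} \cup \xi^{\Z \setminus \{0\}}_{t}$ holds. The theorem is thus equivalent to: with positive probability, every $y \in I_{t}$ reachable by a graphical path from some $(x,0)$ with $x \neq 0$ is also reachable from $(0,0)$.

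The guiding heuristic is a velocity comparison. For $x \geq 1$ and on survival of $\xi^{\{x\}}$, its leftmost particle at time $s$ lies near $x+\beta s$ while the rightmost of $\xi^{O}_{s}$ lies near $\alpha s$; these trajectories first meet at a time $s^{\star} \approx x/(\alpha-\beta)$ and at position $p^{\star} \approx \alpha s^{\star}$, strictly to the right of the right endpoint $(\alpha-\epsilon)s^{\star}$ of $I_{s^{\star}}$. Consequently any infection path from $(x,0)$ that subsequently enters $I_{t}$ must cross the rightmost trajectory of $\xi^{O}$ while the latter is still outside $I_{s}$, which grafts the path onto the descendancy of $(0,0)$. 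A mirror argument handles $x \leq -1$. The upshot is that, provided the edge speeds of $\xi^{O}$ and of each $\xi^{\{x\}}$ are within tolerance of their asymptotic values, the extra infections carried by $\xi^{\Z \setminus \{0\}}$ are absorbed into the $\{0\}$-descendancy before reaching $I_{t}$.

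To implement this rigorously, I would define for each $x \neq 0$ an event $G_{x}$ in the graphical representation enforcing the velocity bounds and the attendant path-absorption property quantitatively, together with an initial-window event $H$ handling the small-$t$ regime (where $I_{t}$ may be empty or degenerate) and forcing $\xi^{O}$ to establish itself near the origin. Classical exponential large-deviation bounds on the edge speeds of the supercritical contact process (see \cite{D80} and refinements in \cite{BG90}) can then be used to obtain $\pr(G_{x}^{c}) \leq C e^{-c|x|}$ uniformly in $x$. Since the $G_{x}$'s localize near well-separated regions of the Poisson data, a standard positive-correlation or direct union-bound argument yields
\[
\pr\!\Bigl( H \cap \bigcap_{x \neq 0} G_{x} \Bigr) \; > \; 0,
\]
once the constants are tuned so that $\sum_{x \neq 0} \pr(G_{x}^{c}) < \pr(H)$; on this event the coupling identity is immediate for all $t \geq 0$.

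The main obstacle I anticipate is the careful design of the events $G_{x}$: they must be simultaneously tractable in the Poisson data, admit sharp uniform exponential tails, and jointly imply the path-absorption property, complicated by the fact that $\sup\xi^{O}_{s}$ and $\inf\xi^{O}_{s}$ are themselves random. The natural resolution is to replace these random boundaries by \emph{deterministic} broken-line barriers of slopes $\alpha - \epsilon/2$ on the right and $\beta + \epsilon/2$ on the left, demand that $\xi^{O}$ dominate them, and demand that each $\xi^{\{x\}}$ remain on their exterior until first hitting them; both requirements reduce to the classical edge-speed estimates. In $d$ dimensions one replaces the edge velocities by the shape function and invokes the corresponding estimates (e.g.\ from \cite{BG90,BG94}), with the rest of the argument unchanged.
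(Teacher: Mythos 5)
Your argument has a genuine gap at its central step: the claim that an infection path from $(x,0)$, $x\neq 0$, which later enters the cone $\bigcup_t I_t\times\{t\}$ must ``cross the rightmost trajectory of $\xi^{O}$'' and is thereby ``grafted onto the descendancy of $(0,0)$.'' This grafting is the classical planarity argument of Durrett \cite{D80}: two infection paths in the space--time plane that cross must share a space--time point, so the invader's path passes through a point of $\xi^{O}$ and its continuation lies in $\xi^{O}$ thereafter. That argument is valid only for nearest-neighbour interactions in one dimension. The theorem here concerns translation-invariant \emph{finite range} processes (range $M$, possibly asymmetric) in $d$ dimensions: for $M>1$ a path from $(x,0)$ can jump over the edge particle of $\xi^{O}$ without ever occupying a site of $\xi^{O}$, and for $d\geq 2$ there is no notion of crossing at all, so your closing remark that the argument carries over ``unchanged'' to higher dimensions does not stand. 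This is precisely why the paper does not argue via edge trajectories: it instead takes as input the almost-sure eventual agreement of $\xi^{O}_t$ and $\xi^{\Z}_t$ on $I_t$ (Proposition 2, the lower-inclusion part of the shape theorem, proved by renormalization), extracts a finite $n_0$ with $\pr(A_{n_0})>0$, and then upgrades agreement from time $n_0$ to time $0$ by a local surgery on the graphical representation (erasing all death marks in a finite space--time box, an event of positive probability that only enlarges $\xi^{O}$), followed by the Markov property. Your velocity-comparison picture, suitably formalized, does yield a correct and arguably more elementary proof in the basic (nearest-neighbour, symmetric, $d=1$) case, but not for the class of processes in the statement.

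Two secondary points. First, the step $\pr(H\cap\bigcap_{x}G_{x})>0$ via ``tuning constants so that $\sum_{x}\pr(G_{x}^{c})<\pr(H)$'' is not available as stated: the constants $C,c$ in $\pr(G_{x}^{c})\leq Ce^{-c|x|}$ are dictated by the large-deviation estimates and the sum $\sum_{x\neq 0}Ce^{-c|x|}$ is a fixed constant with no reason to be smaller than $\pr(H)$. The standard repair is to treat the finitely many $|x|\leq N$ by a separate explicit positive-probability construction (e.g.\ forcing no deaths in a finite box, as the paper does) and to use the exponential tail only for $|x|>N$; note also that FKG is not directly applicable since your events mix increasing requirements (on $\xi^{O}$) with decreasing ones (on the $\xi^{\{x\}}$). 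Second, the uniform-in-$x$ estimate $\pr(G_{x}^{c})\leq Ce^{-c|x|}$ conflates the edge behaviour of $\xi^{\{x\}}$ on survival with the contribution of non-surviving clusters $\xi^{\{x\}}$, which can still inject particles into $I_t$ at small times; your events must be formulated at the level of individual paths rather than of surviving edge processes for the absorption claim to make sense.
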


Note that no loss of generality is incurred by the proviso on the asymptotic velocities in Theorem 1 that is known to be equivalent to supercriticality of the class of processes considered; cf.\ with Theorem 1, $\mathsection$ 3, Durrett and Schonmann \cite{DS87}. The proof goes through refining the argument used in showing that there is a positive chance for two contact processes started from all sites and from any finite set to agree on that set for all times\footnote{Note however that this result relies on a different assumption, namely that the two processes can be coupled to agree on finite sets for all sufficiently large times with probability one on the event that the finite process survives; this condition cannot be in general dropped for this to hold. To see this consider for instance the one-sided interaction process from finite sets (i.e.\ $\mu_{i} \not= 0$ for $i \geq 1$) 
for which the finite dimensional distributions converge to the Dirac measure on the empty set: that is, in the supercritical case, despite that the cardinality of particles tends to infinity when the process survives, 
particles wander off to infinity.}, employed in the proofs of the central limit theorem for the endmost particles of the process; cf.\ with Chpt.\ 4, Remark 2, Tzioufas \cite{T11}. The proof relies on a direct probabilistic argument and does not rely on renormalization arguments other than those necessitated in the proof of the so-called lower-inclusion part of the asymptotic shape theorem (see the explanatory comments below for term usage). 

The first formulation of a version of Theorem 1 in the literature is the so-called "formation of the descendancy barriers"; for precise statements and proofs cf.\ with $\mathsection$ 2.2, p.\ 913 ff., Andjel \textit{et.al.} \cite{AMPV10}. The special case of the analog of Theorem 1 for the so-called \textit{basic} case (i.e.\ nearest neighbors and symmetric interaction) in $d$-dimensions may also be derived by a simple Borel-Cantelli lemma application through Theorem 1.1 of Garet and Marchand \cite{GM14}, which offers large deviations estimates corresponding to the extension of the shape theorem for this process in a random environment achieved in Garet and Marchand \cite{GM12}  by building upon various large deviations estimates associated to the original shape theorem.

Theorem 1 will be derived as a consequence of Proposition 2 given below. We now comment on the intimate relation among them 
for the (supercritical) process in dimension one and under the simplifying assumption of basic interaction. The \textit{asymptotic shape theorem} in this case may be shown as a simple corollary of the almost sure existence of a strictly positive asymptotic velocity of the rightmost site. To see this recall the well-known fact (cf.\ Durrett \cite{D80}) that, by simple two-dimensional path intersection properties of the graphical representation (cf.\ Harris \cite{H78}), the process started from a singleton and that started from all sites are identical amongst the endmost sites of the former, provided that its descendancy is not empty. In the light of this coupling, the following statement that we refer to as \textit{the lower-inclusion} part of the asymptotic shape theorem is then immediate: the descendancy of the process from the single site at the origin and from full occupancy are identical within any linearly growing at rate smaller than the asymptotic velocities interval, for all sufficiently large times, almost surely provided survival occurs\footnote{the upper-inclusion counterpart being that the descendants of former process are contained within any linearly growing at rate larger than the asymptotic velocities interval, for all sufficiently large times, almost surely provided survival occurs.}. In view of these comments, we may now phrase the contents of Theorem 1 as follows. Provided that the random \textit{coupling time} associated to the lower inclusion part of the asymptotic shape theorem is almost surely finite, the corresponding \textit{coupling event} may commence from time zero with positive probability.

\vspace{2mm}

{\bf 2. Preliminaries. } The following result is derived in Durrett and Schonmann \cite{DS87} by means of their extension of the renormalized  construction of Durrett and Griffeath \cite{DG83} to translation invariant, finite range, discrete time one-dimensional contact processes (another outline of these arguments may also be found in $\mathsection$ 4 below).  Note that we shall use coordinatewise notation equally well when convenient, that is we write $\xi(\cdot) = 1(\cdot \in \xi)$, where $1(\cdot)$ denotes the indicator function.

\begin{proposition}[(2),  $\mathsection 6$ in \cite{DS87}] \label{shape} Grant assumptions in Theorem 1. Let $R_{t} = \sup_{s\leq t} r_{s}$  and  $L_{t} = \inf_{s\leq t} l_{s}$. Then $\displaystyle{\{x : \xi_{t}^{O}(x)  = \xi_{t}^{\Z}(x) \mbox{ and } L_{t} \leq x \leq R_{t} \}\supseteq I_{t} \cap \Z}$, for all $t$ sufficiently large, almost surely on $\{\xi_{t}^{O}\not= \emptyset,  \mbox{ for all } t\}$.
\end{proposition}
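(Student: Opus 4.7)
My plan is to reduce the a.s.\ eventual inclusion to a summable exponential estimate on the coupling-failure probability at each lattice point $x \in I_{t} \cap \Z$, and then invoke the Borel--Cantelli lemma.

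I will begin by realising $\xi^{O}$ and $\xi^{\Z}$ on a common graphical representation; attractiveness then gives $\xi^{O}_{t} \subseteq \xi^{\Z}_{t}$ pointwise, so that the ``bad'' set at time $t$ is $\xi^{\Z}_{t} \setminus \xi^{O}_{t}$. For $x$ to belong to this set, some percolation path from $(y,0)$, $y \in \Z$, to $(x,t)$ must evade every percolation path emanating from $(0,0)$ --- a quasi-planarity constraint in one spatial dimension that sharply restricts the joint configuration of arrows and recoveries separating the two clusters. Observe also that, for $x \in I_{t}$, the auxiliary requirement $L_{t} \leq x \leq R_{t}$ is automatically satisfied on survival for all $t$ sufficiently large, directly from $r_{t}/t \to \alpha$ and $l_{t}/t \to \beta$.

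Next I will invoke the core technical ingredient, namely the finite range, translation-invariant extension of the Durrett--Griffeath renormalization carried out in Durrett--Schonmann \cite{DS87}. This construction partitions space-time into blocks of sides $L$ and $T$, declares a block ``good'' when the restriction of the graphical representation to it enjoys uniform propagation, survival, and coincidence with the process started from full occupancy, and shows that the configuration of good blocks stochastically dominates a supercritical oriented site percolation of density $1-\epsilon'$, with $\epsilon' \downarrow 0$ as $L, T \to \infty$. A standard consequence, obtained by routine contour estimates for the dominating percolation, is that the probability of a ``bad gap'' of duration $t$ along any space-time line of slope strictly between $\beta$ and $\alpha$ decays as $e^{-ct}$, with $c = c(\epsilon, \underline{\mu}) > 0$.

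Combining these inputs, for each $x \in I_{t} \cap \Z$ the probability that $\xi^{O}$ fails to couple with $\xi^{\Z}$ at $(x,t)$, on the event of survival, is bounded by the probability of a macroscopic bad gap along the line from $(0,0)$ to $(x,t)$, and hence by $C e^{-ct}$. Since $|I_{t} \cap \Z|$ grows only linearly in $t$, summing over $x \in I_{t} \cap \Z$ and over integer $t \geq 1$ yields a summable bound; a routine interpolation across unit time intervals --- controlling the Poisson count of arrows over $I_{t} \times [t,t+1]$ --- extends the conclusion to all real $t$. The Borel--Cantelli lemma then delivers the a.s.\ eventual inclusion on $\{\xi^{O}_{t} \not= \emptyset \;\forall t\}$. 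The main obstacle will be the uniformity in the renormalization step: the block scales must be chosen so that, once the descendancy of the origin has reached a block, the agreement of $\xi^{O}$ with $\xi^{\Z}$ on that block is established with probability exponentially close to one, uniformly over all blocks whose centres lie in a linear cone of aperture governed by $\epsilon$. This is precisely the content of the block estimates of $\mathsection 6$ in \cite{DS87}, whose careful verification absorbs the bulk of the work and is the step I would replicate in full detail.
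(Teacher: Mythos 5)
Your overall skeleton --- reduce the statement to an exponential bound on $\pr(\xi_{t}^{O}(x) \neq \xi_{t}^{\Z}(x),\ \xi^{O}_{t}\neq\emptyset)$ uniformly over $x \in I_{t}$, sum over the linearly many sites and over integer times, apply the Borel--Cantelli lemma, and interpolate to real $t$ --- is exactly the reduction the paper performs around its display (\ref{fds}), and your observation that the constraint $L_{t} \leq x \leq R_{t}$ is automatic on survival for all large $t$ is correct. The gap is in how you obtain the exponential bound itself. You build the conclusion into the renormalization by declaring a block ``good'' when, among other things, the restricted graphical representation exhibits ``coincidence with the process started from full occupancy,'' and then run a contour estimate for bad gaps along a line of admissible slope. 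But coincidence of $\xi^{O}$ with $\xi^{\Z}$ at a space-time point is not a local, block-measurable event, and it is not part of the good-block definition in $\mathsection$ 5--6 of \cite{DS87}: a discrepancy $x \in \xi_{2t}^{\Z} \setminus \xi_{2t}^{O}$ is produced by an infection path from some $(y,0)$, with $y$ arbitrarily far away, to $(x,2t)$ that avoids the entire descendancy of the origin, and for finite-range, non-nearest-neighbour interactions such a path may cross the origin's cluster without intersecting it --- so the ``quasi-planarity'' heuristic you invoke is unavailable precisely in the generality assumed in Theorem 1. Declaring blocks good when coincidence holds amounts to assuming, at the block level, the estimate you are trying to prove.

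The paper (following $\mathsection$ 6 of \cite{DS87}) supplies the missing mechanism through self-duality. Writing $\tilde{\xi}_{s}^{x}$ for the dual grown downward from $(x,2t)$ over $(t,2t]$, one has $\xi_{2t}^{\Z}(x)=1$ precisely when the dual is nonempty, while $\xi_{2t}^{O}(x)=1$ precisely when $\xi_{t}^{O} \cap \tilde{\xi}_{t}^{x} \neq \emptyset$; hence the discrepancy event is contained in the event (\ref{inters}) that two independent, individually surviving clusters fail to intersect. The exponential bound is then proved by embedding the forward and the dual process each into an independent oriented site percolation of density near one, observing that for $x \in I_{2t}$ an order-$t$ number of renormalized sites of the two clusters spatially overlap, and pairing these up so that each pair is joined independently with probability $p'>0$ (display (\ref{op})), together with a restart argument for the random onset of the embeddings. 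To salvage your route you would need either to restrict to the nearest-neighbour case, where the edge coupling makes your planarity argument literal, or to replace your coincidence-in-a-block event by this forward/dual intersection estimate.
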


\begin{proof}[{\hspace{3mm} \bf 3. Proof of Theorem 1}]
Familiarity with the construction of $(\xi_{t}^{A}: A\subset \Z)$ by a graphical representation, a realization of which will be denoted typically by $\omega$, and standard associated terminology  is assumed, see for example the introductory sections in \cite{D95, L99}.  We will use the notation: for all $\omega \in E_{1}$, $\omega \in E_{2}$ a.e.\ to denote that $\pr(\{\omega: \omega \in E_{1}, \omega \not\in E_{2}\}) = 0$ (where a.e.\ stands for almost everywhere (on $E_{1}$)).

Let $A_{n} = \{\xi^{\Z}_{t} \cap I_{t} =\xi^{O}_{t} \cap I_{t}, \mbox{ for all } t \geq n \}$,
for integer $n\geq0$. Proposition $\ref{shape}$ implies that for all $\omega \in \{\xi_{t}^{O} \not= \emptyset, \mbox{ for all } t\}$, $\omega \in \{\xi^{O}_{t} \cap I_{t} = \xi^{\Z}_{t} \cap I_{t} \mbox{, for all }t\geq t_{0}\}$, for some $t_{0}$, a.e.. Hence $\pr\left(\cup_{n\geq 0} A_{n}\right)$ equals $\pr(\xi_{t}^{O} \not= \emptyset, \mbox{ for all } t)$ which is positive as the process is supercritical, which implies, say, by contradiction, that there is $n_{0}$ finite such that $\pr(A_{n_{0}}) >0$.  

We show that this last conclusion implies that, indeed, $\pr(A_{0})>0$. Let $A_{n_{0}}'$ be such that $\omega'\in A_{n_{0}}'$ if and only if there exists $\omega\in A_{n_{0}}$ such that $\omega$ and $\omega'$ are identical realizations except perhaps from any $\delta$-symbols (death marks) in $\B_{n_{0}} = V \times [0,n_{0}]$, where $V = \{-v_{l},\dots, v_{r}\}$ and $v_{r}, v_{l}$ are the smallest integer greater than $(\alpha- \epsilon) n_{0}$ and than $\min\{0, (\beta + \epsilon) n_{0}\}$ respectively. Let $\tilde{A}_{n} = \{\xi^{\Z}_{t} \cap I_{t} =\xi^{V}_{t} \cap I_{t}, \mbox{ for all } t \geq n \}$, and let $F$ be the event that no $\delta$-symbols appear in $\B_{n_{0}}$. We have that
\begin{eqnarray*}
\pr(A_{n_{0}}' \cap F ) &=& \pr(A_{n_{0}}') \pr(F) \nonumber \\
&\geq& \pr(A_{n_{0}}) e^{-(2|V|+1) n_{0}} >0
\end{eqnarray*}
and that $\tilde{A}_{0} \supseteq A_{n_{0}}' \cap F$, where to see that the last claim is true note that if $\omega$ and $\omega'$ are identical except that $\omega'$ does not contain any $\delta$-symbols in $\B_{n_{0}}$ that possibly exist in $\omega$, then $\omega \in A_{n_{0}}$ implies that\footnote{To infer this, consider sampling from $A_{n_{0}}' \cap F$ and note that adjusting the death marks by mapping on $F$ results in enlarging $\xi_{t}^{O}$ and further that, as paths outside of the box $\B_{n_{0}}$ are left intact by such adjustments, particles of $\xi_{t}^{\Z \backslash V}$ cannot go through $\xi_{t}^{O} \cap I_{t} \times t$, for all $t \in [n_{0}, \infty)$, via trajectories \textit{not} intersecting with the box (property inherited from $A_{n_{0}}$). Further, as $\B_{n_{0}} \subset (\xi_{t}^{V} \times t: t \leq n_{0})$, those particles cannot go through $\xi_{t}^{V} \cap I_{t} \times t$, for all $t \in [n_{0}, \infty)$, via paths intersecting with the box, because they are assimilated (by merging and coalescence) to $\xi_{t}^{V}$. Finally, they cannot go through $\xi_{t}^{V} \cap I_{t} \times t$, for $t \in [0, n_{0})$, simply because by definition of $V$, $I_{t}  \times t \subset \B_{n_{0}}$.} $\omega' \in \tilde{A}_{n_{0}}$, and indeed $\omega' \in \tilde{A}_{0}$. Hence $\pr(\tilde{A}_{0})>0$ and from that the result follows by the Markov property for $\xi_{t}^{O}$ at an appropriately chosen sufficiently small time. 
\end{proof}

\begin{proof}[\hspace{4mm}{\bf 4. Proof outline for Proposition 2.}] We follow arguments from $\mathsection$ 5, 6 in \cite{DS87} that regard the discrete-time version of the process, giving some additional remarks and referring to the original work for more details whenever necessary. Consider the graphical representation for a supercritical contact process which satisfies the assumptions given in Theorem 1.  
We will argue that for any $\epsilon>0$ there exist $C, \gamma \in (0,\infty)$ such that
\begin{equation}\label{fds}
\pr(\xi_{2t}^{O}(x) \not= \xi^{\Z}_{2t}(x), \xi_{2t}^{O} \not= \emptyset) \leq Ce^{-\gamma t}
\end{equation}
for all $x \in I_{2t}$ and $t\geq0$. To see that this suffices, note that the result for integer times then follows from (\ref{fds}) and the 1st Borel-Cantelli lemma immediately, since 
\[
\sum\limits_{n\geq1} \pr\big(\bigcup_{x\in I_{2n}} \xi_{2n}^{O}(x) \not= \xi^{\Z}_{2n}(x) | \hspace{1mm}  \xi_{t}^{O} \not= \emptyset, \mbox{ for all } t\big) < \infty, 
\]
where we first used that  $\pr(\xi_{t}^{O} \not= \emptyset \cap  \{\xi_{t}^{O} \not= \emptyset, \mbox{ for all } t\}^{c})$ is exponentially bounded in $t$, a result proved by emulating a standard version of a restart argument, see Theorem 2.30 (a) in \cite{L99}. Whereas, obtaining the result for all $t$ then follows elementarily by using a "filling in" argument, emulating for instance (4.2.2) in Chpt. 4 in \cite{T11}.

To show (\ref{fds}) one considers the dual process $(\tilde{\xi}_{s}^{x}; 0 \leq s < t)$  defined on the same graphical representation by reversal of arrows over the time interval $(t,2t]$. Note that this process is independent of $(\xi_{s}^{O}, 0 \leq s  \leq t)$ by independence of the Poisson process in disjoint parts of the representation and that its distribution w.r.t.\ the space-time point $x \times t$ is equal to a copy of the process w.r.t.\ the origin and parameters $\mu_{i} = \mu_{-i}$.  (Hence, observe that in particular, if $\tilde{l}_{s} = \inf\{y: y \in\tilde{\xi}_{s}^{x}\}$, $(\tilde{l}_{s}-x)$ is equal in distribution to $(- r_{s})$, and similarly for $\sup\{y: y \in\tilde{\xi}_{s}^{x}\}$). Then,  (\ref{fds})  follows by showing that, there exist $C, \gamma \in (0,\infty)$, such that
\begin{equation}\label{inters}
\pr(\xi_{t}^{O} \cap \tilde{\xi}_{t}^{x} = \emptyset, \xi_{t}^{O} \not= \emptyset,  \tilde{\xi}_{t}^{x} \not= \emptyset) \leq Ce^{-\gamma t},
\end{equation}
for all $x \in I_{2t}$, where, to see that this suffices, note that $\{\xi_{t}^{O}\cap \tilde{\xi}_{t}^{x} \not= \emptyset,  \xi_{t}^{O} \not= \emptyset,  \tilde{\xi}_{t}^{x} \not= \emptyset\}$ is contained in $\{\xi_{2t}^{O}(x)=\xi_{2t}^{\Z}(x)=1, \xi_{2t}^{O} \not=\emptyset\}$.

The proof of (\ref{inters}) then goes through imbedding the rescaled coupled with oriented site percolation construction in order to deduce this from the corresponding result for the last process with density arbitrarily close to 1.  To show this first, let $K_{n}$ be independent retaining probability $p_{S}<1$ oriented site percolation on the usual lattice $\overrightarrow{\mathbb{L}}$ with set of sites the space-time points $(y,n) \in \Z^{2} $ such that $y+n \mbox{ is even}$ and  $ n \geq0$ obtained by adding a bond from each such point $(y,n)$ to $(y-1,n+1)$ and to $(y+1,n+1)$.  Further, let $\tilde{K}_{n}$ be an independent copy of  $K_{n}$ and let also $X_{n} = \{x_{1}, \dots, x_{[cn]}\}$ and $\tilde{X}_{n}= \{\tilde{x}_{1}, \dots, \tilde{x}_{[cn]}\}$ be collections of points at level $n$ with spatial coordinate $y \leq a n $, $a<1$ , $c >0$. Furthermore, let $E_{k}$ denote the event that both $x_{k} \in K_{n}$ and $\tilde{x}_{k} \in \tilde{K}_{n}$, 
and also let $(E'_{k})$ denote an independent thinning of probability $p'>0$ of the $(E_{k})$,  $k=1, \dots [cn]$. We can now state the following easy consequence of Proposition 3 in \cite{TF} we need to employ. If $p_{S}$ is sufficiently close to $1$ then, for any $c>0$,  there are $q<1$ and $C<\infty$ such that 
\begin{equation}\label{op}
\pr\left(\cap_{k=1}^{[c n]} \bar{E}'_{k}, \{ K_{n} \not= \emptyset, \tilde{K}_{n}\not= \emptyset\}\right) \leq Cq^{n}, 
\end{equation}
for any $X_{n}$, $\tilde{X}_{n}$, and where $\bar{E}'_{k}$ denotes the complement of $E'_{k}$.  

Assuming that the comparison of  $(\xi_{s}^{O}, 0 \leq s < t)$ with $(K_{n})$ takes effect (successful embedding) by time 1, then the renormalized sites spread across the entire interval $I_{n T}$, where $n$ is such that $nT <t-1 <(n+1)T$  and $T$ is the corresponding renormalized time constant. Hence, if the comparison of  $(\tilde{\xi}_{s}^{x}, 0 \leq s < t)$ with $(\tilde{K}_{n})$ also takes effect after time 1, we have that, for any $x$,  there is a $c= c(\alpha,\beta)>0$ such that $[ct]$ renormalized sites of the two processes spatially overlap (to check this, take the observation following the definition $\tilde{\xi}_{s}^{x}$ into account and note that the rightmost (resp.\ leftmost) renormalized sites in a successful construction advance with asymptotic velocity $\alpha-\epsilon$ (resp.\ $\beta+\epsilon$)).   
We then derive (\ref{inters}) from $(\ref{op})$ by simply pairing up such spatially overlapping sites and choosing $p'>0$ there to be the probability of joining pairs by a path which is spatially constrained within the span of paired sites.
For this we use the independence of observing a joining among distinct pairs, which is due to the said constrain and independence of events measurable with respect to disjoint parts of the graphical representation. Finally, to work around the difficulty arising from an arbitrary starting point of the successful embeddings, one employs a "restart" technique that incorporates some basic geometrical considerations, see e.g.\ Proposition 2.8 in \cite{T11}. Following the arguments there, this difficulty may be overcome by showing that, outside of an event of exponentially small probability in $t$, one is reduced to working on the event for which the said conclusion regarding the order of spatially overlapping renormalized sites for the two processes again holds, and the last argument applies again to finish the proof. 

\end{proof}

\end{document}